\numberwithin{equation}{section}
\newcommand{\R}{\mathbb{R}}
\newcommand{\mm}{{\mbox{\boldmath$m$}}}
\newcommand{\sfd}{{\sf d}}
\newcommand{\restr}[1]{\lower3pt\hbox{$|_{#1}$}}
\newcommand{\eps}{\varepsilon}  
\newcommand{\nchi}{{\raise.3ex\hbox{$\chi$}}}
\newenvironment{proof}{\removelastskip\par\medskip   
\noindent{\em Proof.}
\rm}{\penalty-20\null\hfill$\square$\par\medbreak}
\newtheorem{theorem}{Theorem}[section]
\newtheorem{corollary}[theorem]{Corollary}
\newtheorem{lemma}[theorem]{Lemma}
\newtheorem{proposition}[theorem]{Proposition}
\newtheorem{definition}[theorem]{Definition}
\newtheorem{remark}[theorem]{Remark}
\newcommand{\Minl}[1]{{\cal M}_-(#1)}
\newcommand{\Minu}[1]{{\cal M}_+(#1)}
\newcommand{\RelMinl}[1]{{\cal M}^*_-(#1)}
\newcommand{\RelMinu}[1]{{\cal M}^*_+(#1)}
\newcommand{\RelMin}[1]{{\cal M}^*(#1)}
\newcommand{\Per}[1]{{\rm Per}(#1)}
\newcommand{\Var}[1]{{\rm Var}(#1)}
\newcommand{\Haus}[1]{{\mathscr H}^{#1}}     
\newcommand{\Hausph}[1]{{\mathscr S}^{#1}}     
\newcommand{\Leb}[1]{{\mathscr L}^{#1}}      
\newcommand{\Slope}[1]{{\rm lip}(#1)}
\newcommand{\fr}{\hfill$\blacksquare$}   
\newcommand{\res}{\mathop{\hbox{\vrule height 7pt width .5pt depth 0pt
\vrule height .5pt width 6pt depth 0pt}}\nolimits} 
\newcommand{\RCD}{\mathrm{RCD}}
\renewcommand{\mm}{\mathfrak m}
\begin{document}

\title{Perimeter as relaxed Minkowski content \\ in metric measure spaces}

\author{Luigi Ambrosio\
   \thanks{Scuola Normale Superiore, Pisa. email: \textsf{luigi.ambrosio@sns.it}}
   \and
   Simone Di Marino\
 \thanks{Laboratoire de Math. d'Orsay, Univ. Paris-Sud, CNRS, Univ. Paris-Saclay. email: \textsf{sthew87@gmail.com}}
 \and
   Nicola Gigli\
 \thanks{SISSA, Trieste. email: \textsf{ngigli@sissa.it}}
   }

\maketitle
\begin{abstract} In this note we prove that on general metric measure spaces the perimeter is equal to the relaxation of the Minkowski content 
w.r.t.\ convergence in measure.
\end{abstract}


\section{Introduction}

In a metric measure space $(X,\sfd,\mm)$, the upper and lower Minkowski contents are respectively defined by
$$
\Minl{A}:=\liminf_{r\downarrow 0}\frac{\mm(A^r)-\mm(A)}{r},\qquad
\Minu{A}:=\limsup_{r\downarrow 0}\frac{\mm(A^r)-\mm(A)}{r}
$$
for Borel sets $A$ with finite $\mm$-measure. The Minkowski contents, even in their non-infinitesimal versions, appear in many areas,
as in the theory of concentration of measures and isoperimetric inequalities (see for instance \cite{LED}, \cite{CavallettiMondinoa}, 
\cite{CavallettiMondinob} and the references therein) and the theory of random closed sets \cite{ACV}. 

For sufficiently nice metric measure structures, the relations between Minkowski content and perimeter are well-known, see
for instance \S2.13 in \cite{AFP} and \S14.2 in \cite{Burago}.
Aim of this note is the investigation of more precise relations between the Minkowski content and the perimeter, as defined
in the theory of $BV$ functions in metric measure spaces. In particular we prove in Lemma~\ref{lem:main1} and Theorem~\ref{thm:main} that the lower semicontinuous envelope
w.r.t. $L^1(X,\mm)$ convergence of the Minkowski contents ${\cal M}_\pm(A)$ is equal to the perimeter $\Per{A}$. As a byproduct, we can prove that
in metric measure spaces with finite $\mm$-measure the Cheeger constant
$$
\gamma:=\inf\left\{\frac{\Minu{A}}{\mm(A)}:\ 0<\mm(A)\leq\frac{\mm(X)}{2}\right\}
$$
can be equivalently defined replacing $\Minu{A}$ with $\Minl{A}$, or with $\Per{A}$.

Another consequence of our result is that whenever one wants to establish, on a given space, an isoperimetric inequality of the form
\[
\mm(A)\leq f(\Per{A})
\]
for some continuous non-decreasing function $f:\R^+\to \R^+$, it is sufficient to prove the easier
\[
\mm(A)\leq f(\Minu{A}).
\]
We remark that not only it is easily seen that $\Minu{A}\geq \Per A$ (see also the proof of Theorem \ref{thm:main}), but also that the Minkowski content is a quantity that by nature is sometimes handled better than the perimeter in estimates involving the geometry of the space. An example in this direction is the recent paper \cite{CavallettiMondinoa} by Cavalletti-Mondino, which motivated our study (see also the work in progress \cite{CavallettiMondinoc}, which contains
results similar to ours, under curvature assumptions).

Another goal of the paper is a closer investigation of the coarea formula and of the ``generic'' properties of superlevel sets
of Lipschitz functions. In Euclidean and other nice spaces, the combination of the Fleming-Rishel formula 
(involving the perimeter of superlevel sets) with the coarea formula for Lipschitz maps (involving the 
Hausdorff measure of level sets) provides many useful informations, even on the level sets, as illustrated in Remark~\ref{It_is_not_trivial}. 
Under a suitable regularity assumption \eqref{eq:13} on the metric measure structure, fulfilled in all spaces $RCD(K,\infty)$ of \cite{AGS11b}, we provide
in Proposition~\ref{prop:coarea} a metric counterpart of this, involving the Minkowski contents. Finally, we are able to make
a more detailed analysis for level sets of distance functions and we conclude the paper pointing out a few open questions.

\section{Basic setting and preliminaries}

Throughout this paper $(X,\sfd)$ is a metric space and $\mm$ is a nonnegative and $\sigma$-additive measure on its
Borel $\sigma$-algebra; we always assume that $\mm$ is finite on bounded Borel sets. In particular, $\mm$ is $\sigma$-finite.

In the metric space $(X,\sfd)$ we denote by $B_r(x)$ the open ball with center $x$ and radius $r$. We denote by ${\rm Lip}(f)$ the
Lipschitz constant of a Lipschitz function $f:X\to\R$ and we will often use the distance function
$$
\sfd_A(x):=\inf_{y\in A}\sfd(x,y)
$$
from a nonempty set $A$, whose Lipschitz constant is less than 1. The slope $\Slope{f}$ (also
called local Lipschitz constant) of $f:X\to\R$ is defined by
$$
\Slope{f}(x):=\limsup_{y\to x}\frac{|f(y)-f(x)|}{\sfd(y,x)},
$$
with the convention $\Slope{f}(x)=0$ if $x$ is an isolated point. 

We denote by $\chi_A:X\to\{0,1\}$ the characteristic function of a set A and we say that $A_h\to A$ in $\mm$-measure if
$\int_X|\chi_{A_h}-\chi_A|\,d\mm\to 0$ (equivalently, $\mm(A_h\Delta A)\to 0$). For any nonempty 
set $A\subset X$ and any $r>0$ we define the open $r$-enlargement $A^r$ of $A$ by
$$
A^r:=\left\{x\in X:\ \sfd_A(x)<r\right\}.
$$
Notice that $A^r=(\overline{A})^r$, and that the triangle inequality gives the semigroup inclusion
\begin{equation}\label{eq:0}
(A^s)^t\subset A^{s+t}\qquad s,\,t>0.
\end{equation}

For any Borel set $A\subset X$ with $\mm(A)<\infty$ we define the upper and lower Minkowski contents by
\[
\Minl{A}:=\liminf_{r\downarrow 0}\frac{\mm(A^r)-\mm(A)}{r},\qquad
\Minu{A}:=\limsup_{r\downarrow 0}\frac{\mm(A^r)-\mm(A)}{r}.
\]
Obviously one has $0\leq \Minl{A}\leq\Minu{A}\leq\infty$. In addition, $\Minl{A}=\infty$ if $\mm(\overline{A}\setminus A)>0$, hence
a natural domain for the Minkowski content is the class of essentially closed sets. Notice also that $\Minl{A}<\infty$ implies $\mm(A^r\setminus A)\to 0$
as $r\downarrow 0$. 

We define the relaxed Minkowski contents $\RelMinl{A}$, $\RelMinu{A}$ of $A$ by
\[
\begin{split}
\RelMinl{A}&:=\inf\left\{\liminf\limits_{h\to\infty}\Minl{A_h}:\ \text{$A_h\to A$ in $\mm$-measure}\right\},\\
\RelMinu{A}&:=\inf\left\{\liminf\limits_{h\to\infty}\Minu{A_h}:\ \text{$A_h\to A$ in $\mm$-measure}\right\}.
\end{split}
\]

It is obvious that $\RelMinl{A}\leq\RelMinu{A}$. The following elementary lemma shows that equality holds, so from now on we will also use
the notation $\RelMin{A}$ for their common value.

\begin{lemma} \label{lem:main1} For any Borel set $A\subset X$ one has
\begin{equation}\label{eq:4}
\Minl{A}\geq\RelMinu{A}.
\end{equation}
In particular $\RelMinl{A}=\RelMinu{A}$.
\end{lemma}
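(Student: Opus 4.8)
The plan is to produce, for $A$ itself, a recovery sequence consisting of the \emph{outer} enlargements $A_h:=A^{r_h}$ with $r_h\to0$ suitably chosen, and to bound $\Minu{A_h}$ by the infinitesimal behaviour of the monotone function $\phi(r):=\mm(A^r)$.

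First, the trivial reductions: the right-hand side of \eqref{eq:4} never exceeds $\Minu A\le\infty$, so there is nothing to prove when $\Minl A=\infty$, and $\Minl A=\infty$ whenever $\mm(A)=\infty$ or $\mm(\overline A\setminus A)>0$. Hence I may assume $\mm(A)<\infty$, $\mm(\overline A)=\mm(A)$ and $L:=\Minl A<\infty$. Set $\phi(r):=\mm(A^r)$ for $r>0$ and $\phi(0):=\mm(A)$. Then $\phi$ is nondecreasing on $[0,\infty)$ (monotonicity of $r\mapsto A^r$), finite for all small $r>0$ (since $\Minl A<\infty$), and right-continuous at $0$, because $\bigcap_{r>0}A^r=\{\sfd_A=0\}=\overline A$ and $\mm(\overline A)=\mm(A)<\infty$; moreover $L=\liminf_{r\downarrow0}\big(\phi(r)-\phi(0)\big)/r$.

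The only real work is the next step. By the classical differentiation theory for monotone functions, $\phi$ is differentiable $\mathcal L^1$-a.e.\ on $(0,\infty)$, $\phi'\ge0$ satisfies $\int_0^s\phi'\,dt\le\phi(s)-\phi(0)$ for every $s>0$, and at $\mathcal L^1$-a.e.\ point one has $D^+\phi(r):=\limsup_{t\downarrow0}\big(\phi(r+t)-\phi(r)\big)/t=\phi'(r)$. Fix $s_j\downarrow0$ with $c_j:=\big(\phi(s_j)-\phi(0)\big)/s_j\to L$. Since the mean value of $\phi'$ over $(0,s_j)$ is at most $c_j$, the set $\{r\in(0,s_j):\phi'(r)\le c_j\}$ has positive Lebesgue measure; intersecting it with the conull set $\{D^+\phi=\phi'\}$, I select $r_j\in(0,s_j)$ with $D^+\phi(r_j)\le c_j$. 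The semigroup inclusion \eqref{eq:0}, in the form $(A^{r_j})^t\subset A^{r_j+t}$, then gives
\[
\Minu{A^{r_j}}=\limsup_{t\downarrow0}\frac{\mm\big((A^{r_j})^t\big)-\mm(A^{r_j})}{t}\le\limsup_{t\downarrow0}\frac{\phi(r_j+t)-\phi(r_j)}{t}=D^+\phi(r_j)\le c_j .
\]
Since $r_j\to0$ and $\overline A\subset A^{r_j}$, one has $\mm(A^{r_j}\Delta A)=\phi(r_j)-\mm(A)\to0$, i.e.\ $A^{r_j}\to A$ in $\mm$-measure, while $\liminf_j\Minu{A^{r_j}}\le\lim_j c_j=L=\Minl A$. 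By the definition of $\RelMinu{A}$ this is exactly \eqref{eq:4}.

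Finally, the equality $\RelMinl A=\RelMinu A$. The inequality $\RelMinl A\le\RelMinu A$ is immediate since $\Minl B\le\Minu B$ for every Borel set $B$. For the converse, let $A_h\to A$ in $\mm$-measure be arbitrary; applying \eqref{eq:4} to each $A_h$ gives $\Minl{A_h}\ge\RelMinu{A_h}$. As $\RelMinu{\cdot}$ is, by construction, the lower semicontinuous envelope of $\Minu{\cdot}$ with respect to convergence in $\mm$-measure, which is metrizable on Borel sets of finite measure via the distance $\mm(\,\cdot\,\Delta\,\cdot\,)$, it is lower semicontinuous along $A_h\to A$, so $\liminf_h\Minl{A_h}\ge\liminf_h\RelMinu{A_h}\ge\RelMinu A$; taking the infimum over all such sequences gives $\RelMinl A\ge\RelMinu A$. (Alternatively, one argues by a direct diagonal extraction: from the definition of each $\RelMinu{A_h}$ choose $C_h$ with $\mm(C_h\Delta A_h)\le1/h$ and $\Minu{C_h}\le\Minl{A_h}+2/h$; then $C_h\to A$ in $\mm$-measure and $\liminf_h\Minu{C_h}\le\liminf_h\Minl{A_h}$, whence $\RelMinu A\le\liminf_h\Minl{A_h}$.)
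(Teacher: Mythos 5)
Your proof is correct and follows essentially the same route as the paper: both arguments rest on the semigroup inclusion $(A^s)^t\subset A^{s+t}$, the monotone function $\phi(r)=\mm(A^r)$ with $\int_0^r\phi'\le\phi(r)-\phi(0)$, the a.e.\ bound $\Minu{A^s}\le\phi'(s)$, and a selection of small radii where $\phi'$ is controlled, followed by lower semicontinuity of $\RelMinu{\cdot}$ for the equality of the two relaxations. The only difference is expository: where the paper invokes an integral mean-value argument, you spell out the Chebyshev-type selection of the radii $r_j$ on the conull set $\{D^+\phi=\phi'\}$, which is the same idea made explicit.
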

\begin{proof} We can assume with no loss of generality $\Minl{A}<\infty$, hence $\mm(A^s\setminus A)\to 0$ as $s\downarrow 0$.
We start from the semigroup inclusion \eqref{eq:0} to get
$$
\frac{\mm(A^{s+t})-\mm(A^s)}{t}\geq \frac{\mm((A^s)^t)-\mm(A^s)}{t}.
$$
Now, at any differentiability point $s$ of the nondecreasing and left continuous map $f(r):=\mm(A^r)$ we get $f'(s)\geq\Minu{A^s}$ and an integration w.r.t. $s$
gives
$$
\mm(A^r)-\mm(A)=f(r)-f(0_+)\geq\int_0^r f'(s)\,ds\geq \int_0^r\Minu{A^s}\, ds\qquad\forall r>0.
$$
Dividing both sides by $r$ and letting $r\downarrow 0$ an application of the mean value theorem gives the first claim.

In order to prove the equality of relaxed Minkowski contents, let $A_h\to A$ in $\mm$-measure and apply \eqref{eq:4}
to get
$$
\RelMinu{A}\leq\liminf_{h\to\infty}\RelMinu{A_h}\leq
\liminf_{h\to\infty}\Minl{A_h}.
$$
The arbitrariness of $A_h$ yields the inequality $\RelMinu{A}\leq\RelMinl{A}$.
\end{proof}

\section{Equality of relaxed contents and perimeter}

In this section we shall use the semigroup
\begin{equation}\label{eq:6}
T_t f(x):=\sup_{B_t(x)} f,\qquad T_0f=f,
\end{equation}
which extends the operator $A\mapsto A^t$ from characteristic functions to functions, i.e. $T_t\chi_A=\chi_{A^t}$ for all $t>0$.
Actually the term ``semigroup'' is not totally correct here, since only the inequality $T_{s+t}f\geq T_s(T_t f)$ holds for the operator
$T_t$ in \eqref{eq:6}; the inequality is an equality if $(X,\sfd)$ is a length space 
(see also \cite{AmDi} for the simple proof and an example of strict inequality). 
We will only need the inequality in the sequel.

Notice that $T_t f\geq f$ and that $T_t f$ is lower semicontinuous for all $t>0$. The definition of $\Slope{f}$ gives also immediately
\begin{equation}\label{eq:10}
\limsup_{t\downarrow 0}\frac{T_t f-f}{t}\leq\Slope{f}.
\end{equation}

\begin{remark}{\rm
The semigroup $T_t$ already had a role (up to a change of sign, resulting in the replacement of $\sup$ with $\inf$) 
in the proof given in \cite{AmDi} of the equality between relaxed upper gradients and ``measure'' upper gradients
in the context of the theory of $BV$ functions and sets of finite perimeter. The ``$\inf$'' semigroup can also be formally 
viewed as the limit as $p\to\infty$ of the semigroups
$$
T_t^pf(x):=\inf_{y\in X} f(y)+\frac{1}{pt^{p-1}}\sfd^p(y,x).
$$
associated to the Hamilton-Jacobi equation $\partial_t g+|\nabla g|^q/q=0$, see \cite{AGS11a, GRS}.

It is also worthwhile to mention that the semigroup $T_t$ has already been used in connection with the (anisotropic) Minkowski content in \cite{CLL}, but only in an Euclidean setting: their case can be regarded as our construction in  the metric space $(\R^d,\sfd_C,\Leb{d})$, where $\sfd_C$ is the gauge function associated to a convex set $C$.
}\fr\end{remark}

\begin{lemma}\label{lem:simone}
For all $f:X\to [0,\infty)$ Lipschitz with $\mm(\{f>0\}^h)<\infty$ for some $h>0$, one has
\begin{equation}\label{eq:9}
\int_0^\infty\Minl{\{f\geq t\}}\,dt\leq\liminf_{t\downarrow 0} \int_X \frac{T_t f-f}{t}\,d\mm\leq
\int_X \Slope{f}\,d\mm.
\end{equation}
\end{lemma}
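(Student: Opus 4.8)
The plan is to prove the two inequalities in \eqref{eq:9} separately, the right one being essentially immediate from \eqref{eq:10} and the left one requiring a coarea-type argument together with Lemma~\ref{lem:main1}. For the right inequality, note that $0\le (T_tf-f)/t\le\Slope{f}$ fails pointwise in general, but $(T_tf-f)/t$ is supported in a fixed set of finite measure (here the hypothesis $\mm(\{f>0\}^h)<\infty$ enters: for $t\le h$ one has $T_tf=0$ outside $\{f>0\}^h$) and, by \eqref{eq:10}, $\limsup_{t\downarrow0}(T_tf-f)/t\le\Slope{f}$ pointwise. Moreover $T_tf-f\le (\mathrm{Lip}f)\,t$, so the integrands are uniformly bounded on a finite-measure set; Fatou's lemma (applied to the reverse, i.e.\ to $(\mathrm{Lip}f)-(T_tf-f)/t\ge0$ up to constants) then yields $\limsup_{t\downarrow0}\int_X (T_tf-f)/t\,d\mm\le\int_X\Slope{f}\,d\mm$, which is more than enough for the stated $\liminf$.

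For the left inequality I would use the layer-cake representation. Writing $T_tf=\chi\!$-combinations: for each $x$, $T_tf(x)-f(x)=\int_0^\infty\big(\chi_{\{T_tf\ge\tau\}}(x)-\chi_{\{f\ge\tau\}}(x)\big)\,d\tau$, and since $T_tf\ge f$ the integrand is nonnegative. Fubini gives
\[
\int_X\frac{T_tf-f}{t}\,d\mm=\int_0^\infty\frac{\mm(\{T_tf\ge\tau\})-\mm(\{f\ge\tau\})}{t}\,d\tau.
\]
Now I want to compare $\{T_tf\ge\tau\}$ with $(\{f\ge\tau\})^t$. The key pointwise fact is $\{f\ge\tau\}^t\subset\{T_tf\ge\tau\}$: if $\sfd_{\{f\ge\tau\}}(x)<t$ then there is $y\in B_t(x)$ with $f(y)\ge\tau$, hence $T_tf(x)=\sup_{B_t(x)}f\ge\tau$. (The reverse inclusion can fail because the sup need not be attained, but we only need this direction.) Therefore $\mm(\{T_tf\ge\tau\})\ge\mm(\{f\ge\tau\}^t)$, and combining,
\[
\int_X\frac{T_tf-f}{t}\,d\mm\ \ge\ \int_0^\infty\frac{\mm(\{f\ge\tau\}^t)-\mm(\{f\ge\tau\})}{t}\,d\tau.
\]
Taking $\liminf_{t\downarrow0}$ and applying Fatou's lemma in the $\tau$ variable (all integrands nonnegative) gives the left inequality of \eqref{eq:9}, since $\liminf_{t\downarrow0}\big(\mm(\{f\ge\tau\}^t)-\mm(\{f\ge\tau\})\big)/t=\Minl{\{f\ge\tau\}}$ by definition.

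The main obstacle to watch is measurability and the exchange of $\liminf$ with the $\tau$-integral. The function $(t,\tau)\mapsto\mm(\{f\ge\tau\}^t)$ is monotone in each variable and $\tau\mapsto\mm(\{f\ge\tau\})$ is nonincreasing, so $\tau\mapsto\mm(\{f\ge\tau\}^t)-\mm(\{f\ge\tau\})$ is Borel measurable, and its pointwise $\liminf$ over a sequence $t_k\downarrow0$ is measurable; one must pass to a countable sequence achieving the $\liminf$ of the full integral (by a diagonal/subsequence argument extracting a sequence $t_k$ along which $\int_X(T_{t_k}f-f)/t_k\,d\mm$ converges to the liminf) before invoking Fatou. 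A minor point is to confirm that the relevant sets have finite measure so that $\mm(\{f\ge\tau\}^t)-\mm(\{f\ge\tau\})$ is a well-defined nonnegative real number: for $0<\tau\le\mathrm{Lip}(f)\cdot\mathrm{something}$ and $t\le h$, $\{f\ge\tau\}^t\subset\{f>0\}^{t}\subset\{f>0\}^h$, which has finite measure by hypothesis; for $\tau>\sup f$ both sets are empty. With these checks in place the two-sided estimate \eqref{eq:9} follows.
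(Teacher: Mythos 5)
Your proof is correct and follows essentially the same route as the paper: a pointwise coarea/layer-cake identity followed by Fatou in the $\tau$-integral for the left inequality, and the uniform bound $(T_tf-f)/t\le\mathrm{Lip}(f)\,\chi_{\{f>0\}^h}$ together with the pointwise $\limsup$ estimate \eqref{eq:10} for the right one. The only minor difference is that the paper records the exact pointwise identity $\int_0^\infty\chi_{\{f\ge\tau\}^t}\,d\tau=T_tf$ (so the coarea step is an equality, not just the one-sided inequality you obtain from the inclusion $\{f\ge\tau\}^t\subset\{T_tf\ge\tau\}$), but both versions suffice.
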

\begin{proof}
We start from the elementary ``coarea'' pointwise identities $\int_0^\infty\chi_{\{f\geq t\}}\,dt=f$ and 
\[
\int_0^\infty T_h\chi_{\{f\geq t\}}\,dt=\int_0^\infty \chi_{\{f\geq t\}^h}\,dt=T_hf
\]
for $f:X\to [0,\infty]$. If $f$ is Borel with $\int_X f\,d\mm<\infty$, subtracting the first identity from the second one and integrating w.r.t. $\mm$ gives
$$
\int_0^\infty \frac{\mm(\{f\geq t\}^h)-\mm(\{f\geq t\})}{h}\,dt=\int \frac{T_h f-f}{h}\,d\mm.
$$
Now, notice that one has the bound
$$
\frac{T_h f-f}{h}\leq {\rm Lip}(f)\chi_{\{f>0\}^h}.
$$
Hence, assuming in addition that $f$ is Lipschitz and that  for some $h>0$ we have $\mm(\{f>0\}^h)<\infty$, Fatou's lemma, dominated convergence and \eqref{eq:10}
give \eqref{eq:9}.
\end{proof}

The following definition of perimeter, first proposed in \cite{Mi} and further investigated in \cite{AmDi} (dropping the local compactness
assumption on the metric structure) is by now well studied, see also the following remarks.

\begin{definition}[Perimeter]\label{def:Perimeter}
Let $A\subset X$ be a Borel set with $\mm(A)<\infty$. We define
\begin{equation}\label{eq:defper}
\Per{A}:=\inf\left\{\liminf_{h\to\infty}\int_X \Slope{f_h}\,d\mm:\ f_h\in {\rm Lip}(X),\,\,\,\lim_{h\to\infty}\int_X|f_h-\chi_A|\,d\mm=0\right\}.
\end{equation}
\end{definition}

If $\mm(X)$ is finite we can also consider the perimeter of $X\setminus A$; it is then easy to check that $\Per{A}=\Per{X\setminus A}$.
In the following remarks we show that smaller classes than ${\rm Lip}(X)$ can be considered in the definition of $\Per{A}$, comparing
also with the definitions in \cite{Mi} and \cite{AmDi}.

\begin{remark}\label{rem_more_general}{\rm
A simple truncation argument shows that we need only to consider sequences $f_h$ with $0\leq f_h\leq 1$ in \eqref{eq:defper}. Also, if $\bar x\in X$
and $\phi_R:X\to [0,1]$ are $1$-Lipschitz functions with $\phi_R\equiv 1$ on $B_R(\bar x)$ and $\phi_R\equiv 0$ on $X\setminus B_{2R}(\bar x)$,
from the inequality 
$$
\int_X\Slope{f_h\phi_R}\,d\mm\leq \int_X\Slope{f_h}\,d\mm+\int_{X\setminus B_R(\bar x)}f_h\,d\mm
$$
and a diagonal argument (i.e. choosing $R=R(h)$) we obtain that only sequences of Lipschitz functions
$f_h:X\to [0,1]$ with $\{f_h>0\}$ bounded need to be considered in the definition of $\Per{A}$.\fr}
\end{remark}

\begin{remark}\label{rem_equivalent_definitions}{\rm
Definition~\ref{def:Perimeter} appeared first in \cite{Mi} in complete and doubling metric measure spaces, requiring approximation
by locally Lipschitz functions and $L^1_{\rm loc}$ convergence. Then, at the general level of complete and separable metric spaces
(for instance the most appropriate setting for the theory of $CD(K,\infty)$ spaces),
the definition has been revisited in \cite{AmDi}, requiring $L^1$ convergence of $f_h$ to $\chi_A$, as in \eqref{eq:defper}, but requiring 
$f_h$ to be locally Lipschitz (i.e. for any $x\in X$ there exists $r>0$ such that $f\vert_{B_r(x)}$ is Lipschitz). As in \cite{Mi}, the motivation
for this definition is that, when localized to open sets $U\subset X$ via
$$
\mu(U):=\inf\left\{\liminf_{h\to\infty}\int_U \Slope{f_h}\,d\mm:\ f_h\in {\rm Lip}_{\rm loc}(U),\,\,\,\lim_{h\to\infty}\int_U|f_h-\chi_A|\,d\mm=0\right\}
$$
is continuous w.r.t. monotone nondecreasing sequences and thus it
provides the restriction to open sets of a finite Borel measure, namely the perimeter measure.

In proper metric spaces (a class that includes complete and doubling metric spaces) it is not hard to prove that all these variants 
of Definition~\ref{def:Perimeter} lead to the same definition, for sets with finite $\mm$-measure.
The equivalence persists also for complete and separable spaces, but the proof is not elementary, see
Section~4 and, in particular, Theorem~4.5.3 in \cite{TesiSimone}. However, the equivalence with \cite{AmDi} 
does not play any role in the paper and 
Definition~\ref{def:Perimeter} will be our working definition.
}\fr\end{remark}

\begin{theorem}\label{thm:main}
For any Borel set $A\subset X$ with $\mm(A)<\infty$ one has $\RelMinl{A}=\Per{A}$.
\end{theorem}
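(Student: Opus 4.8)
The plan is to prove the two inequalities $\RelMinl{A}\le\Per{A}$ and $\Per{A}\le\RelMinl{A}$ separately; by Lemma~\ref{lem:main1} it suffices to work with the single quantity $\RelMin{A}=\RelMinl{A}=\RelMinu{A}$.

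For the inequality $\RelMinl{A}\le\Per{A}$, I would start from an arbitrary sequence $f_h\in{\rm Lip}(X)$ with $\int_X|f_h-\chi_A|\,d\mm\to0$ that is almost optimal for $\Per{A}$; by Remark~\ref{rem_more_general} I may assume $0\le f_h\le1$ and $\{f_h>0\}$ bounded, so in particular $\mm(\{f_h>0\}^h)<\infty$ and Lemma~\ref{lem:simone} applies. Applying \eqref{eq:9} to each $f_h$ gives $\int_0^\infty\Minl{\{f_h\ge t\}}\,dt\le\int_X\Slope{f_h}\,d\mm$. The idea is then a Fleming--Rishel-type selection of a good level $t_h\in(0,1)$: since $\int_0^1\Minl{\{f_h\ge t\}}\,dt\le\int_X\Slope{f_h}\,d\mm$, for each $h$ there is $t_h\in(0,1)$ with $\Minl{\{f_h\ge t_h\}}\le\int_X\Slope{f_h}\,d\mm$, and moreover (choosing $t_h$ in a subset of $(0,1)$ of measure close to $1$, using $\int_X|f_h-\chi_A|\,d\mm\to0$) we can also arrange $\mm(\{f_h\ge t_h\}\,\Delta\,A)\to0$, i.e.\ $\{f_h\ge t_h\}\to A$ in $\mm$-measure. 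Hence $\{f_h\ge t_h\}$ is an admissible competitor in the definition of $\RelMinl{A}$ and $\RelMinl{A}\le\liminf_h\Minl{\{f_h\ge t_h\}}\le\liminf_h\int_X\Slope{f_h}\,d\mm$; taking the infimum over approximating sequences gives $\RelMinl{A}\le\Per{A}$.

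For the reverse inequality $\Per{A}\le\RelMinl{A}$, the first observation is that for a \emph{single} set $B$ with $\Minu{B}<\infty$ one has $\Per{B}\le\Minu{B}$: indeed $f_r:=(1-\sfd_B/r)^+$ is Lipschitz with $\Slope{f_r}\le\frac1r\chi_{B^r\setminus\overline B}$, so $\int_X\Slope{f_r}\,d\mm\le\frac{\mm(B^r)-\mm(\overline B)}{r}\le\frac{\mm(B^r)-\mm(B)}{r}$, and since $f_r\to\chi_{\overline B}=\chi_B$ in $L^1$ along a sequence $r\downarrow0$ realizing the $\limsup$ (note $\Minu B<\infty$ forces $\mm(\overline B\setminus B)=0$ and $\mm(B^r\setminus B)\to0$), we get $\Per{B}=\Per{\overline B}\le\Minu{B}$. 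Now take $A_h\to A$ in $\mm$-measure with $\liminf_h\Minl{A_h}=\RelMinl{A}$; passing to a subsequence we may assume the $\liminf$ is a limit and each $\Minl{A_h}<\infty$. Unfortunately the bound above is in terms of $\Minu{}$, not $\Minl{}$, so one cannot apply it directly to $A_h$. The remedy is to use Lemma~\ref{lem:main1} in the form $\Minl{A_h}\ge\RelMinu{A_h}$: for each $h$ pick, via the definition of $\RelMinu{A_h}$, a set $B_h$ with $B_h\to A_h$ in $\mm$-measure (close enough that a diagonal sequence still converges to $A$) and $\Minu{B_h}\le\Minl{A_h}+1/h$; then $\Per{B_h}\le\Minu{B_h}\le\Minl{A_h}+1/h$. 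Since $B_h\to A$ in $\mm$-measure and $\Per{}$ is itself lower semicontinuous w.r.t.\ $L^1$ convergence (this is immediate from its definition via a diagonal argument on the defining sequences), we conclude $\Per{A}\le\liminf_h\Per{B_h}\le\liminf_h\Minl{A_h}=\RelMinl{A}$.

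\textbf{Main obstacle.} I expect the delicate point to be the simultaneous choice of the level $t_h$ in the first half: one must select $t_h$ so that both the energy bound $\Minl{\{f_h\ge t_h\}}\lesssim\int_X\Slope{f_h}\,d\mm$ holds (a Chebyshev/averaging argument over $t\in(0,1)$) \emph{and} $\{f_h\ge t_h\}\to A$ in measure (which needs $\mm(\{0<f_h<1\})\to0$, a consequence of $f_h\to\chi_A$ in $L^1$ together with $0\le f_h\le1$, via $\mm(\{f_h\ge t\}\,\Delta\,A)\le\frac1{\min(t,1-t)}\int_X|f_h-\chi_A|\,d\mm$ for $t\in(0,1)$). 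Making these two requirements compatible on a common large subset of levels, and handling the bookkeeping of subsequences cleanly, is the technical heart; everything else is either the elementary one-set estimate $\Per B\le\Minu B$ or the soft lower-semicontinuity of $\Per{}$.
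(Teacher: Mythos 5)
Your first inequality $\RelMinl{A}\le\Per{A}$ follows the paper's argument: apply Lemma~\ref{lem:simone} to an almost-optimal sequence of Lipschitz $f_h$ with $0\le f_h\le 1$ and $\{f_h>0\}$ bounded, then select a good level $t_h$. The ``main obstacle'' you flag does not in fact arise: fixing $\eps\in(0,1/2)$ and restricting the Chebyshev selection to $t_h\in(\eps,1-\eps)$ (at the cost of the harmless factor $(1-2\eps)^{-1}$, removed by $\eps\downarrow 0$ at the end) automatically guarantees $\mm(\{f_h\ge t_h\}\,\Delta\,A)\le\eps^{-1}\int_X|f_h-\chi_A|\,d\mm\to 0$, so convergence in measure holds for \emph{every} admissible level once the interval is uniformly separated from $\{0,1\}$; there is no competition between the two requirements.

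The second inequality has a genuine gap. The claimed bound $\Slope{f_r}\le\frac1r\chi_{B^r\setminus\overline B}$ for $f_r=(1-\sfd_B/r)^+$ is false: $f_r$ equals $1$ exactly on $\overline B$, which need not contain a neighbourhood of its own points, so $\Slope{f_r}$ can be as large as $1/r$ on $\partial\overline B$ (in $\R$, take $B=\{0\}$, $r=1$: $\Slope{f_1}(0)=1$). What is true is only $\Slope{f_r}\le\frac1r\chi_{\overline{B^r}\setminus{\rm int}\,\overline B}$, and since $\mm({\rm int}\,\overline B)$ can be strictly smaller than $\mm(B)$ even when $\mm(\overline B\setminus B)=0$ (e.g.\ for sets with empty interior), the inequality $\int_X\Slope{f_r}\,d\mm\le\frac{\mm(B^r)-\mm(B)}{r}$ does not follow, and with it the whole estimate $\Per B\le\Minu B$ collapses. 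The paper's proof avoids this precisely by replacing $\sfd_B$ with $\sfd_{B^s}$ for a small auxiliary $s>0$ (with $s+r'<r$, $r'=\tau r$): the regularized function is identically $1$ on the \emph{open} set $B^s\supset B$ and identically $0$ off $B^r$, hence has slope bounded by $1/r'$ and supported in $B^r\setminus B$; one then sends $\tau\uparrow 1$. Once this fix is in, the same construction gives the sharper $\Per B\le\Minl B$ directly (choose radii $r_h\downarrow 0$ realizing the $\liminf$ rather than the $\limsup$, noting $\mm(B^{r_h}\setminus B)\to 0$), so your detour through $\Minu$, the extra invocation of Lemma~\ref{lem:main1}, and the second approximating family $B_h\to A_h$ are all unnecessary: one simply applies $\Per{A_h}\le\Minl{A_h}$ along any $A_h\to A$ and uses lower semicontinuity of the perimeter.
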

\begin{proof}\ {\bf Claim: $\Minl{A}\geq\Per{A}$}. We can assume with no loss
of generality $\Minl{A}<\infty$, hence $\mm(A^r\setminus A)\to 0$ as $r\downarrow 0$. 
Let $\tau\in (0,1)$ be fixed and let $f_r:X\to [0,1]$ be given by 
$$f_r(x)=1- 1\wedge\frac{\sfd_{A^s}(x)}{r'}\qquad r>0,$$
where  
$s>0$ satisfies $s+r'<r$. Since $A$ is contained in the open set $A^s$, where $f_r$ is identically equal to 1, one has $\Slope{f_r}\equiv 0$ on $A$;
on the other hand, since the complement of $A^r$ is contained in the open set $\{d_{A^s}>r'\}$, where $f_r$ is identically equal to 0,
$\Slope{f_r}\equiv 0$ on the complement of $A^r$. Thus, the inequalities $\Slope{f_r}\leq {\rm Lip}(f_r)\leq 1/r'\leq 1/(\tau r)$ give
$$
\frac{\mm(A^r)-\mm(A)}{r}\geq
\tau \int_X\Slope{f_r}\,d\mm.
$$

We now estimate
$$
\int_X|f_r-\chi_A|\,d\mm=\int_{X\setminus A}f_r\,d\mm\leq\mm(A^r\setminus A)\rightarrow 0.
$$
Then, choosing an infinitesimal sequence $(r_h)$ of radii on which the $\liminf$ is achieved, 
since $f_{r_i}\to\chi_A$ in $L^1(X,\mm)$ the very definition of $\Per{A}$ gives $\Minl{A}\geq \tau\Per{A}$.
Eventually we let $\tau\uparrow 1$.

\noindent{\bf Claim: $\RelMinl{A}\geq\Per{A}$}. Let $A_h\to A$ in $\mm$-measure and use the lower semicontinuity of the perimeter and the above claim to get
$$
\Per{A}\leq\liminf_{h\to\infty}\Per{A_h}\leq\liminf_{h\to\infty}\Minl{A_h}.
$$
Since $A_h$ are arbitrary, this proves that $\Per{A}\leq\RelMinl{A}$.

\noindent{\bf Claim: $\Per{A}\geq\RelMinl{A}$}. Thanks to Remark~\ref{rem_more_general} we can find a family of Lipschitz
functions $f_h:X\to [0,1]$ with $\{f_h>0\}$ bounded, $f_h\to\chi_A$ in $L^1(X,\mm)$ and 
$\int_X\Slope{f_h}\,d\mm\to \Per{A}$. For all $\epsilon\in (0,1/2)$ we can find, thanks to \eqref{eq:9},
$t_h\in (\epsilon,1-\epsilon)$ with
$$
\Minl{\{f_h\geq t_h\}}\leq\frac{1}{1-2\epsilon}\int_X\Slope{f_h}\,d\mm.
$$
Since $\{f_h\geq t_h\}\to A$ in $\mm$-measure 
it follows that $(1-2\epsilon)\RelMinl{A}\leq\Per{A}$ and letting $\epsilon\downarrow 0$ the inequality is achieved.
\end{proof}

\section{Level sets of Lipschitz functions}

In this section we study the relation between perimeter and Minkowski content for generic superlevel sets of Lipschitz
functions.

Definition~\ref{def:Perimeter} of perimeter is a particular case of the following one, for $f\in L^1(X,\mm)$:
\[
\Var{f}:=\inf\left\{\liminf_{h\to\infty}\int_X \Slope{f_h}\,d\mm:\ f_h\in {\rm Lip}(X),\,\,\,\lim_{h\to\infty}\int_X|f_h-f|\,d\mm=0\right\}.
\]
The two concepts are closely related. Indeed, by approximating any $L^1$ function with step functions and, conversely, characteristic functions
of $\{f>t\}$ by $\chi_\eps\circ f$, where $\chi_\epsilon$ is a smooth approximation of $\chi_{[t,\infty)}$, it can be easily proved (see
\cite{Mi} for details) that the classical coarea formula of Fleming-Rishel, namely
\begin{equation}\label{eq:12}
\Var{f}=\int_0^\infty\Per{\{f\geq t\}}\,dt\qquad\text{$f\in L^1(X,\mm)$, $f\geq 0$}
\end{equation}
holds even in this abstract setting, without any finiteness assumption on either side of the equality. Under the regularity
assumption 
\begin{equation}\label{eq:13}
\Var{f}=\int_X\Slope{f}\,d\mm\qquad\text{for all $f\in L^1(X,\mm)$ Lipschitz}
\end{equation}
on the metric measure structure
we can now prove that we can replace perimeter with the lower Minkowski content in \eqref{eq:12}, namely
\begin{equation}\label{eq:12bis}
\Var{f}=\int_0^\infty\Minl{\{f\geq t\}}\,dt\qquad\text{$f\in L^1(X,\mm)$, $f\geq 0$.}
\end{equation}

\begin{remark}\label{It_is_not_trivial}{\rm
In Euclidean spaces $\R^n$, combining the Fleming-Rishel formula, the pointwise inequalities
$$
\Per{\{f\geq t\}}=\Haus{n-1}(\partial^*\{f\geq t\})\leq
\Haus{n-1}(\partial\{f>t\})\leq\Haus{n-1}(\{f=t\})
$$
(where $\partial^*\{f\geq t\}$ is the essential boundary of $\{f\geq t\}$, a countably $\Haus{n-1}$-rectifiable set whenever $\Per{\{f\geq t\}}$ is finite)
and the coarea formula for nonnegative Lipschitz functions
$$
\int_{\R^n}|\nabla f|\,dx=\int_0^\infty \Haus{n-1}(\{f=t\})\,dt
$$
one obtains
$$
\Per{\{f\geq t\}}=\Haus{n-1}(\partial\{f\geq t\})=\Haus{n-1}(\{f=t\})\qquad\text{for $\Leb{1}$-a.e. $t>0$}
$$
and that $\{f=t\}$ is countably $\Haus{n-1}$-rectifiable for $\Leb{1}$-a.e. $t>0$.
However, the validity of \eqref{eq:12bis} is a nontrivial information
even in the Euclidean case, since rectifiability does not imply, in general, finiteness of the Minkowski content and agreement
with the Hausdorff measures. See \S2.13 of \cite{AFP} for an example of compact countable set having infinite Minkowski
content. Finiteness of the Minkowski contents is ensured for instance by density lower bounds of the form
$$
\sigma(B_r(x))\geq \frac{\mm(B_r(x))}{r}\qquad\forall x\in\partial A,\,\,r\in (0,1) 
$$
for some finite measure $\sigma$, see Theorem~2.104 for the simple argument, which also proves that this property, in conjunction with
rectifiability, provides agreement of the Minkowski contents with the Hausdorff measures.}\fr
\end{remark}

Notice that, thanks to the truncation
argument of Remark~\ref{rem_more_general}, an equivalent formulation of \eqref{eq:13} is the lower semicontinuity of the functional 
$f\mapsto\int_X\Slope{f}\,d\mm$, restricted to Lipschitz and integrable functions, w.r.t. $L^1(X,\mm)$ convergence. 

\begin{proposition}\label{prop:coarea}
Under assumption \eqref{eq:13}, one has
\begin{equation}\label{eq:14}
\Var{f}=\int_0^\infty\Minl{\{f\geq t\}}\,dt
\end{equation}
for any Lipschitz function $f:X\to [0,\infty)$ with $\mm(\{f>0\}^h)<\infty$ for some $h>0$.
As a consequence 
$$
\Minl{\{f\geq t\}}=\Per{\{f\geq t\}}\qquad\text{for $\Leb{1}$-a.e. $t>0$.}$$
In addition, if $f:X\to [0,M]$ is $1$-Lipschitz, $\mm(\{f>0\}^h)<\infty$ for some $h>0$ 
and $\Slope{f}=1$ $\mm$-a.e. in $\{0<f<M\}$, one has
$$
\Minl{\{f\geq t\}}=\Minu{\{f\geq t\}}\qquad\text{for $\Leb{1}$-a.e. $t\in (0,M)$.}
$$
\end{proposition}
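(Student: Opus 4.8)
\emph{Proof plan.} I would establish the three assertions one after the other, each by squeezing between integral (in)equalities that are already available.

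\textbf{Step 1: the coarea identity \eqref{eq:14} and the a.e.\ agreement $\Minl{\{f\ge t\}}=\Per{\{f\ge t\}}$.} First I note that $f$ meets the hypotheses of Lemma~\ref{lem:simone}, so $\int_0^\infty\Minl{\{f\ge t\}}\,dt\le\int_X\Slope f\,d\mm$. Next, the Fleming--Rishel formula \eqref{eq:12} together with the regularity assumption \eqref{eq:13} gives $\int_X\Slope f\,d\mm=\Var f=\int_0^\infty\Per{\{f\ge t\}}\,dt$; if $f\notin L^1(X,\mm)$ I would first apply this to the truncations $f\wedge M$ (which are bounded, Lipschitz, integrable, and still satisfy $\mm(\{f\wedge M>0\}^h)<\infty$) and then let $M\uparrow\infty$, since both sides of \eqref{eq:14} are monotone limits of the corresponding quantities for $f\wedge M$. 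Finally, the first claim in the proof of Theorem~\ref{thm:main} yields $\Minl A\ge\Per A$ for every Borel set of finite measure, hence $\Minl{\{f\ge t\}}\ge\Per{\{f\ge t\}}$ for all $t>0$ (as $\mm(\{f\ge t\})\le\mm(\{f>0\}^h)<\infty$). Combining,
\[
\int_0^\infty\Per{\{f\ge t\}}\,dt\le\int_0^\infty\Minl{\{f\ge t\}}\,dt\le\int_X\Slope f\,d\mm=\int_0^\infty\Per{\{f\ge t\}}\,dt,
\]
so all the inequalities are equalities: this proves \eqref{eq:14}, and since the pointwise inequality $\Minl{\{f\ge t\}}\ge\Per{\{f\ge t\}}$ now integrates to an equality it must be an equality for $\Leb1$-a.e.\ $t>0$.

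\textbf{Step 2: an upper bound for $\Minu{\{f\ge t\}}$ in the $1$-Lipschitz case.} Here I would use the $1$-Lipschitz hypothesis exactly once, to get the inclusion $\{f\ge t\}^r\subseteq\{f>t-r\}$ (if $f(y)\ge t$ and $\sfd(x,y)<r$, then $f(x)\ge f(y)-\sfd(x,y)>t-r$). Hence, for $0<r<t<M$,
\[
\mm(\{f\ge t\}^r)-\mm(\{f\ge t\})\le\mm(\{t-r<f<t\})\le F(t)-F(t-r),\qquad F(s):=\mm(\{0<f<s\}).
\]
The function $F:[0,M]\to[0,\infty)$ is nondecreasing and finite (being bounded by $\mm(\{f>0\})$), hence differentiable $\Leb1$-a.e., with $\tfrac{F(t)-F(t-r)}{r}\to F'(t)$ as $r\downarrow0$ and $\int_0^M F'(t)\,dt\le F(M)-F(0)=\mm(\{0<f<M\})$. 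Dividing the displayed inequality by $r$ and passing to $\limsup_{r\downarrow0}$ gives $\Minu{\{f\ge t\}}\le F'(t)$ for $\Leb1$-a.e.\ $t\in(0,M)$.

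\textbf{Step 3: closing the loop.} Now I would feed in the hypothesis $\Slope f=1$ $\mm$-a.e.\ on $\{0<f<M\}$ together with Step 1:
\[
\int_0^M\Minl{\{f\ge t\}}\,dt\le\int_0^M\Minu{\{f\ge t\}}\,dt\le\int_0^M F'(t)\,dt\le\mm(\{0<f<M\})=\int_{\{0<f<M\}}\Slope f\,d\mm\le\int_X\Slope f\,d\mm,
\]
while by \eqref{eq:14} and \eqref{eq:13} the last member equals $\int_0^\infty\Minl{\{f\ge t\}}\,dt=\int_0^M\Minl{\{f\ge t\}}\,dt$ (the superlevel sets are empty for $t>M$). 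Hence every inequality above is an equality; in particular $\int_0^M\big(\Minu{\{f\ge t\}}-\Minl{\{f\ge t\}}\big)\,dt=0$ with nonnegative integrand, so $\Minl{\{f\ge t\}}=\Minu{\{f\ge t\}}$ for $\Leb1$-a.e.\ $t\in(0,M)$.

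The only non-formal point is Step 2: the insight that the $1$-Lipschitz assumption is precisely what sandwiches $\{f\ge t\}^r$ between $\{f\ge t\}$ and $\{f>t-r\}$, so that the volume increment is controlled by the increment of the \emph{monotone} ``distribution function'' $F$, to which the classical one-dimensional differentiation theorem for monotone functions applies and produces a pointwise bound on $\Minu$; everything else is bookkeeping (including the harmless truncation $f\mapsto f\wedge M$ when $f$ is not integrable) and a squeeze between integral inequalities already in hand.
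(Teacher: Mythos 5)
Your proof is correct and follows essentially the same route as the paper's: squeeze $\int_0^\infty\Minl{\{f\ge t\}}\,dt$ between $\int_X\Slope f\,d\mm$ (via Lemma~\ref{lem:simone} and \eqref{eq:13}) and $\int_0^\infty\Per{\{f\ge t\}}\,dt$ (via the pointwise inequality $\Minl\ge\Per$ and \eqref{eq:12}), and for the $1$-Lipschitz case use the inclusion $\{f\ge t\}^r\subset\{f>t-r\}$ to bound $\Minu{\{f\ge t\}}$ by the a.e.\ derivative of the (monotone) distribution function and then close the chain through $\mm(\{0<f<M\})$. The only genuine addition is your truncation $f\mapsto f\wedge M$ to cover the case $f\notin L^1$; the paper implicitly treats $f\in L^1$ (as required for $\Var f$ to be defined), so this remark is a small, harmless refinement rather than a divergence.
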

\begin{proof} Taking \eqref{eq:12} and the inequality $\Minl{A}\geq\Per{A}$ into account, for the first part of the statement 
it is sufficient to prove the inequality $\geq$ in \eqref{eq:14}. Since by assumption $\Var{f}\geq\int_X\Slope{f}\,d\mm$, the inequality
follows at once from \eqref{eq:9} of Lemma~\ref{lem:simone}.

In order to prove the second statement we start from the inequality
$$
\frac{\mm(\{f\geq t\}^h)-\mm(\{f\geq t\})}{h}\leq\frac{\mm(\{f\geq t-h\})-\mm(\{f\geq t\})}{h}
$$
to get $\Minu{\{f\geq t\}}\leq -\mm(\{f\geq t\})'$ for $\Leb{1}$-a.e. $t>0$. By integration in $(0,M)$ we get
$$
\int_0^\infty\Minu{\{f\geq t\}}\,dt\leq\mm(\{0<f<M\})\leq\int_X\Slope{f}\,d\mm=
\int_0^\infty\Minl{\{f\geq t\}}\,dt,
$$
whence the result follows.
\end{proof}

\begin{remark}[The case of distance functions]{\rm
Assume that $(X,\sfd)$ satisfies the following length assumption: for any $x,\,y\in X$ there exists an $\epsilon$-geodesic between 
$x$ and $y$, namely a finite collection of points $z_0,\ldots,z_N$ with $z_0=x$, $z_N=y$, $\max_{0\leq i<N}\sfd(z_i,z_{i+1})<\eps$ and
$$
\sum_{i=0}^{N-1}\sfd(z_i,z_{i+1})<\sfd(x,y)+\eps.
$$
If $A\subset X$ is nonempty and closed, under this length assumption on $(X,\sfd)$ it is easy to check that
$\Slope{\sfd_A}\equiv 1$ in $X\setminus A$. Therefore, the second part of Proposition~\ref{prop:coarea} is applicable to all functions
$(M-\sfd_A)^+$ for all $M>0$ and provides, still under the assumption \eqref{eq:13}, the property
\begin{equation}\label{eq:latem1}
\Minl{\{\sfd_A\leq t\}}=\Minu{\{\sfd_A\leq t\}}=\Per{\{\sfd_A\leq t\}}
\end{equation}
for $\Leb{1}$-a.e. $t>0$. If $\mm(X)<\infty$ we can apply the statement also to $\sfd_A$ to get
\begin{equation}\label{eq:latem2}
\Minl{\{\sfd_A\geq t\}}=\Minu{\{\sfd_A\geq t\}}=\Per{\{\sfd_A\geq t\}}
\end{equation}
for $\Leb{1}$-a.e. $t>0$. Since $\Per{\{\sfd_A\geq t\}}=\Per{\{\sfd_A\leq t\}}$ with at most countably many exceptions,
the quantities in \eqref{eq:latem1} and \eqref{eq:latem2} coincide $\Leb{1}$-a.e. in $(0,\infty)$. 

If the length assumption above is enforced to
$$
\text{$\forall x,\,y\in X$, $\forall\eps>0$ there exists $\gamma\in C([0,1];X)$ with $\gamma_0=x$, $\gamma_1=y$, ${\rm length}(\gamma)<\sfd(x,y)+\eps$}
$$
(notice that the two length properties are equivalent in separable spaces), a simple continuity argument gives
$$
\{\sfd_A=t\}^r\setminus\{\sfd_A=t\}=\{0<\sfd_{\{\sfd_A\geq t\}}<r\}\cup\{0<\sfd_{\{\sfd_A\leq t\}}<r\}\qquad\forall t\in (0,\infty),\,\,r>0. 
$$
Then, since $\mm(\{\sfd_A=t\})=0$ with at most countably many exceptions, 
we obtain existence of the Minkowski content of the level set $\{\sfd_A=t\}$
$$
\lim_{r\downarrow 0}\frac{\mm(\{\sfd_A=t\}^r)}{2r}=\Per{\{\sfd_A\geq t\}}=\Per{\{\sfd_A\leq t\}}
$$
for $\Leb{1}$-a.e. $t>0$.}\fr
\end{remark}

Concerning assumption \eqref{eq:13}, we point that it holds on doubling $\RCD(K,\infty)$ spaces (see  \cite{AGS11b}) for the definition of the latter. Indeed, on one side we know by Rajala's paper \cite{Ra} that on these spaces a 1-1 weak Poincar\'e inequality holds, so that thanks to Cheeger's results in \cite{Chee} we have that the $p$-weak gradient is $\mm$-a.e. equal to the local Lipschitz constant of Lipschitz functions for $p>1$. On the other hand, in \cite{GiHa} it has been proved that on $\RCD(K,\infty)$ spaces the $p$-weak gradients all coincide even for $p=1$ (this result is based on the Bakry-Emery estimate in the form established by Savar\'e in \cite{Sa}), thus giving the claim.
We illustrate in the next remark the known relations between $\Slope{f}$ and other weak notions of gradient, and why
doubling\&1-Poincar\'e alone are not sufficient to ensure the validity of \eqref{eq:13}.

\begin{remark}{\rm Recall that one of the key results of the seminal paper \cite{Chee} is the lower semicontinuity of
the functional $\int_X\Slope{f}^p\,d\mm$ w.r.t. $L^p$ convergence on locally Lipschitz functions, for all $p\in (1,\infty)$, assuming the
doubling property of the metric measure structure and the validity of a $p$-Poincar\'e inequality. This result can also be rephrased by
saying that $\Slope{f}$ coincides $\mm$-a.e. with the minimal $p$-weak upper gradient.

However, under the same structural
assumptions, the result is not true in general when $p=1$.
Indeed, revisiting an example by Carbone-Sbordone \cite{CaSb},  in \cite{HKLL14}, a metric measure space $(X,\sfd,\mm)$ is built as follows:
$X=[0,1]$ is endowed with the Euclidean distance and with the weighted Lebesgue measure $\mm=\omega\Leb{1}$, with $\omega\equiv 1$
on a ``fat'' Cantor set $K\subset (0,1)$ (i.e. a compact totally disconnected set with positive Lebesgue measure) and $\omega=1/2$, say, on
$(0,1)\setminus K$. It is clear that $(X,\sfd,\mm)$, being comparable to the standard Euclidean structure, satisfies the
doubling and $1$-Poincar\'e assumptions. On the other hand, in \cite{HKLL14} Lipschitz functions $f_n$ convergent to the 
identity function $f$ in $L^1(X,\mm)$ are built in such a way that
$$
\limsup_{n\to\infty}\int_X\Slope{f_n}\,d\mm<\int_X\Slope{f}\,d\mm, 
$$
thus disproving the lower semicontinuity. This corresponds to the fact that two notions of minimal $1$-weak upper gradients
are possible in the $H^{1,1}$ (and $BV$) theory. The two notions do not coincide in general, and $\Slope{f}$ coincides only with one of 
them (see \cite[Theorem~12.5.1]{HKST}).
See also \cite{AmPiSp} and \cite{HKLL14} for a more detailed discussion.\fr}
\end{remark}

\section{Some open problems}

In complete metric measure spaces with a doubling measure, under the assumption of the validity of a 1-Poincar\'e inequality,
the first author proved in \cite{ambrosio-ahlfors}, \cite{ambrosio-doubling} that the perimeter measure of Remark~\ref{rem_equivalent_definitions}
coincides con $\theta\Hausph{h}\res\partial^* E$, where $\partial ^*E$ is the essential boundary of $E$
(i.e. the complement of the union of density and rarefaction points of $E$) and $\Hausph{h}$ is the measure built of out the
gauge function $\zeta(\overline{B}_r(x))=\mm(\overline{B}_r(x))/(2r)$ with Carath\'eodory's construction. The density $\theta$
is bounded from below by the structural constants involved in the doubling and Poincar\'e assumptions, but little more is known
in general about it. Under additional regularity assumptions, it would be interesting to relate more closely the Minkowski content of 
$E$ with the measure $\Hausph{h}\res\partial^* E$, as in the Euclidean-Riemannian theory.

In the same vein, one can prove the coarea inequality (Proposition~5.1 in \cite{ambrosio-doubling}) 
\begin{equation}\label{eq:alba}
\int_0^\infty\Hausph{h}(B\cap\{f=t\})\,dt\leq {\rm Lip}(f)\mm(B)  \qquad\text{$B$ Borel}
\end{equation}
reminiscent of \eqref{eq:14}: we propose here a self-contained proof of an improved version of this inequality, involving the asymptotic
Lipschitz constant
$$
{\rm Lip}_a(f,x):=\lim_{r\downarrow 0}{\rm Lip}(f,B_r(x))
$$
and, with an additional factor 2, the slope.

\begin{proposition} Assume that $\mm$ is a doubling measure in the 
metric space $(X,\sfd)$.
Let $f:X \to [0, \infty)$ be a Lipschitz function with $\mm(\{ f>0 \}) < \infty$. Then for every $B\subset X$ Borel one has
\begin{equation}\label{eq:alba2}
\int_0^\infty\Hausph{h}(B\cap\{f=t\})\,dt\leq  \int_B {\rm Lip}_a (f,\cdot) \, d \mm,
\end{equation}
\begin{equation}\label{eq:alba3}
\int_0^\infty\Hausph{h}(B\cap\{f=t\})\,dt\leq  2 \int_B  \Slope{f}  \, d \mm.
\end{equation}
\end{proposition}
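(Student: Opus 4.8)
The plan is to prove both \eqref{eq:alba2} and \eqref{eq:alba3} by a single Vitali-type covering argument in which every ball $\overline B_r(x)$ of the cover is sliced by the level sets of $f$: such a ball meets $\{f=t\}$ exactly for $t\in f(\overline B_r(x))$, an interval whose length is at most $2r$ times a pointwise Lipschitz quantity at $x$. First I would make two routine reductions. Since $f$ is continuous, $\{f=t\}\subset\{f>0\}$ for every $t>0$, so $B\cap\{f=t\}=(B\cap\{f>0\})\cap\{f=t\}$ there; replacing $B$ by $B\cap\{f>0\}$ leaves the left-hand sides unchanged and does not increase the right-hand sides, so I may assume $B\subset\{f>0\}$, whence $\mm(B)<\infty$ and ${\rm Lip}_a(f,\cdot),\,\Slope{f}\le{\rm Lip}(f)$ on $B$. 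Writing $\Hausph{h}=\sup_{\delta>0}\Hausph{h}_\delta$, where $\Hausph{h}_\delta$ is built from covers by closed balls of radius $\le\delta$, monotone convergence in $t$ reduces both claims to a bound on $\int_0^\infty\Hausph{h}_\delta(B\cap\{f=t\})\,dt$ that is uniform in $\delta>0$ (measurability in $t$ of this integrand is the usual technicality of coarea statements, handled classically or circumvented with upper integrals).

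Next I would record the coarse estimate $\int_0^\infty\Hausph{h}(E\cap\{f=t\})\,dt\le C\,{\rm Lip}(f)\,\mm(E)$ for all Borel $E$ with $\mm(E)<\infty$, $C$ depending only on the doubling constant: cover $E$ at a small scale by closed balls centred in $E$ whose fivefold enlargements lie in a fixed open $U\supset E$, use the basic $5r$-covering lemma to get a disjoint subfamily $\{\overline B_{r_i}(x_i)\}$ with $E\subset\bigcup_i\overline B_{5r_i}(x_i)$, bound $|f(\overline B_{5r_i}(x_i))|\le 10\,r_i\,{\rm Lip}(f)$ and, by doubling, $\mm(\overline B_{5r_i}(x_i))\le C\,\mm(\overline B_{r_i}(x_i))$, sum using disjointness, let the scale go to $0$, and finally take the infimum over $U$ by outer regularity of $\mm$. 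Its only use will be the corollary that $E\mapsto\int_0^\infty\Hausph{h}(E\cap\{f=t\})\,dt$ vanishes on $\mm$-negligible sets.

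For the main step I fix $\delta,\eps,\eta>0$ and stratify $B$ into the finitely many nonempty Borel sets $B_k:=\{x\in B:\ k\eta\le{\rm Lip}_a(f,x)<(k+1)\eta\}$, choosing open $U_k\supset B_k$ with $\mm(U_k)\le\mm(B_k)+\mu_k$ and $\sum_k((k+1)\eta+\eps)\mu_k\le\eta$. For $x\in B_k$ and all sufficiently small $r$ one has $\overline B_r(x)\subset U_k$ and, since $\mathrm{diam}\,\overline B_r(x)\le 2r$ and ${\rm Lip}(f,B_{2r}(x))\to{\rm Lip}_a(f,x)$,
\[
\bigl|f(\overline B_r(x))\bigr|\le\mathrm{osc}_{\overline B_r(x)}f\le 2r\,\bigl({\rm Lip}_a(f,x)+\eps\bigr).
\]
The closed balls enjoying these two properties form a fine cover of $B_k$, so the Vitali covering theorem (this is where the doubling hypothesis is used) yields a countable disjoint subfamily $\{\overline B_{r_i}(x_i)\}_i$, $x_i\in B_k$, $r_i\le\delta$, covering $\mm$-almost all of $B_k$. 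Using that $\overline B_{r_i}(x_i)$ meets $\{f=t\}$ exactly for $t\in f(\overline B_{r_i}(x_i))$ and discarding the $\mm$-null remainder via the corollary above,
\[
\int_0^\infty\Hausph{h}_\delta(B_k\cap\{f=t\})\,dt\le\sum_i\frac{\mm(\overline B_{r_i}(x_i))}{2r_i}\,\bigl|f(\overline B_{r_i}(x_i))\bigr|\le\sum_i\mm(\overline B_{r_i}(x_i))\bigl({\rm Lip}_a(f,x_i)+\eps\bigr)\le\bigl((k+1)\eta+\eps\bigr)\mm(U_k).
\]
Summing over $k$, using $\sum_k k\eta\,\mm(B_k)\le\int_B{\rm Lip}_a(f,\cdot)\,d\mm$ and the choice of the $\mu_k$, gives $\int_0^\infty\Hausph{h}_\delta(B\cap\{f=t\})\,dt\le\int_B{\rm Lip}_a(f,\cdot)\,d\mm+(\eta+\eps)\mm(B)+\eta$; letting $\delta\downarrow0$ and then $\eps,\eta\downarrow0$ gives \eqref{eq:alba2}. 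For \eqref{eq:alba3} I would repeat the scheme with $\Slope{f}$ in place of ${\rm Lip}_a(f,\cdot)$, replacing the oscillation estimate by the one-sided bound $|f(y)-f(x)|\le(\Slope{f}(x)+\eps)\,\sfd(y,x)$ for $y\in\overline B_r(x)$ and $r$ small (immediate from the definition of the slope), which confines $f(\overline B_r(x))$ to an interval of length $2r(\Slope{f}(x)+\eps)$; the identical bookkeeping then yields \eqref{eq:alba3}.

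The point I expect to need the most care, and which I would settle first, is the $\mm$-null set left behind by the Vitali extraction: since $\Hausph{h}$ behaves like a codimension-one measure, it can be large on $\mm$-null sets, so that remainder may not be dropped on a fixed level set; it disappears only after integration in $t$, and precisely because of the crude absolutely continuous bound of the second paragraph. The remaining issues — measurability in $t$ of $\Hausph{h}_\delta(B\cap\{f=t\})$, Borel measurability of ${\rm Lip}_a(f,\cdot)$ and $\Slope{f}$, and the passage from the premeasures $\Hausph{h}_\delta$ to $\Hausph{h}$ — are routine.
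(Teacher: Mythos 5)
Your proof is correct, and the scheme is recognisably the same at the outer level (crude bound via a covering lemma, then Vitali extraction and integration in $t$), but the way you handle the passage from the per--ball oscillation estimate to the integral of the pointwise gradient is genuinely different. The paper bounds $(t_i^+-t_i^-)/(2r_i)$ by $2\,{\rm sl}(z,\delta)$ (resp.\ ${\rm Lip}(f,B_{2\delta}(z))$) for an \emph{arbitrary} $z\in B_i$, integrates this over $z\in B_i$, sums, and only at the end sends $\delta\downarrow 0$ using dominated convergence and the monotone convergence ${\rm sl}(\cdot,\delta)\downarrow\Slope{f}$, ${\rm Lip}(f,B_{2\delta}(\cdot))\downarrow{\rm Lip}_a(f,\cdot)$. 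You instead stratify $B$ by the approximate value of the pointwise quantity, use that the Vitali balls extracted in the stratum $B_k$ are \emph{centred} in $B_k$, and pass from the sum $\sum_i\mm(B_i)g(x_i)$ to $\int g$ by outer regularity and the fact that $g$ is within $\eta$ of $k\eta$ on $B_k$. Two remarks. First, for the crude a.e.-absolute-continuity bound the paper in fact gets the sharp constant $1$ (using property (i), i.e.\ outer approximation by ball covers) where your $5r$-argument gives a doubling-dependent constant $C$; that is harmless since it is only used to kill null sets, but worth knowing the cleaner statement \eqref{eq:alba} is available. Second, and more importantly: your treatment of \eqref{eq:alba3} actually proves \emph{more} than you claim. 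Since Vitali gives balls centred in $B_k$, your bookkeeping produces $\sum_i\frac{\mm(B_i)}{2r_i}\lvert f(B_i)\rvert\leq\sum_i\mm(B_i)(\Slope{f}(x_i)+\eps)\leq((k+1)\eta+\eps)\mm(U_k)$ with no loss of a factor $2$, yielding $\int_0^\infty\Hausph{h}(B\cap\{f=t\})\,dt\leq\int_B\Slope{f}\,d\mm$. This is exactly the improvement of \eqref{eq:alba} that the paper singles out in its closing remarks as seeming ``difficult to obtain without extra assumptions''; the factor $2$ in the paper's proof of \eqref{eq:alba3} comes precisely from using the slope at an arbitrary point of $B_i$ rather than at the centre, which they do in order to integrate over $B_i$, and your stratification bypasses that. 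You should state this explicitly rather than derive the weaker \eqref{eq:alba3}. A final small point in your favour: you control the fact that the extracted balls spill outside $B_k$ by confining the fine cover to $U_k$ and budgeting $\mm(U_k)\leq\mm(B_k)+\mu_k$; the paper's passage from $\sum_i 2\int_{B_i}{\rm sl}(\cdot,\delta)\,d\mm$ to $2\int_B{\rm sl}(\cdot,\delta)\,d\mm$ leaves the analogous outer-approximation step implicit.
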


\begin{proof} Let $B\subset X$ be a bounded Borel set. First we recall that if $\mm$ is a doubling measure then these two properties hold, which are both consequences of the Vitali covering theorem w.r.t.
doubling measures:
\begin{itemize}
\item[(i)] for all $\delta>0$, $\mm(B)= \inf \left\{ \sum_i \mm(B_i) \; : \; B_i=\overline{B}_{r_i}(x_i) ,\,  r_i\in (0,\delta), \, B \subset \bigcup B_i, \right\}$;
\item[(ii)] for every collection of closed balls $\mathcal{F}$ that is a fine cover of $B$ we have a disjoint collection $\mathcal{F}'\subset{\mathcal F}$ that covers $\mm$-almost all of $B$.
\end{itemize}

For every $\delta>0$ and every $\varepsilon>0$ we have, by property (i), the existence of closed balls $B_i$ such that $B \subset\bigcup B_i$, 
radius $r_i<\delta/2$ and $\mm(B) \leq \varepsilon + \sum_i \mm (B_i)$. Let us denote $t_i^- = \inf_{B_i} f$ and $t_i^+ = \sup_{B_i} f$: then $B_i \cap \{ f = t \} \neq \emptyset $ implies 
that $t \in [t_i^- , t_i^+]$ and in particular we have 
$$\Hausph{h}_\delta ( B \cap \{ f=t\} ) \leq \sum_{i \text{ s.t. }  t \in [t_i^- , t_i^+]}\frac  {\mm(B_i) } { 2r_i  }. $$
Integrating this inequality from $0$ to $\infty$ we obtain
\begin{equation}\label{eqn:discrete} \int_0^{\infty} \Hausph{h}_{\delta} ( B \cap \{ f=t\} ) \, dt \leq 
\sum_i \frac  {\mm(B_i) } { 2r_i  }(t_i^+ - t_i^- ). \end{equation}
It is clear that $(t_i^+ - t_i^-) \leq 2r_i {\rm Lip }(f)$, and thus the countable subadditivity of $\Hausph{h}_\delta$ gives 
$$ \int_0^\infty \Hausph{h}_\delta ( B \cap \{ f=t\} ) \, dt \leq {\rm Lip }(f) \sum_i \mm(B_i) \leq {\rm Lip }(f) (\mm(B) + \varepsilon ). $$
Letting $\varepsilon,\ , \delta \to 0$ we will get \eqref{eq:alba}. This proves also that $ \Hausph{h} ( B \cap \{ f=t\} ) =0$ for $\Leb{1}$-a.e. $t\in (0,\infty)$
whenever $\mm(B)=0$; in this case, a fortiori, for every $\delta>0$ one has $ \Hausph{h}_\delta ( B \cap \{ f=t\} ) =0$ for $\Leb{1}$-a.e. $t\in (0,\infty)$.
 
Now we use property (ii): for every $\delta$ we find a a disjoint family of closed balls $\{B_i\}_{i \in \mathbb{N}}$ with radii less than $\delta/2$ such that 
$\mm( B \setminus \tilde{B})=0$, where $\tilde{B} = \bigcup_i B_i$. It follows that
 $$ \Hausph{h}_\delta ( B \cap \{ f=t\} ) \leq \Hausph{h}_\delta ( \tilde{B} \cap \{ f=t\} ) + 
 \Hausph{h}_{\delta} ( (B \setminus \tilde{B} ) \cap \{ f=t\} )   = \Hausph{h}_{\delta} ( \tilde{B} \cap \{ f=t\} ) $$
 for $\Leb{1}$-a.e. $t\in (0,\infty)$ 
and thus $\int_0^{\infty}  \Hausph{h}_\delta ( B \cap \{ f=t\} ) \, dt = \int_0^{\infty}  \Hausph{h}_{\delta} ( \tilde B \cap \{ f=t\} ) \, dt$. 
Now we can apply \eqref{eqn:discrete} for $\tilde B$ and its covering to obtain the same inequality for $B$. Now we notice now that for all
$x \in B_i$ one has
\begin{align*} \frac{ t_i^+ - t_i^- } { 2r_i } &= \sup_{ y , \,y' \in B_i } \frac { f(y) - f(y') } { 2r_i } 
  \leq \sup_{ y ,\, y' \in B_i } \frac { |f(y)-f(x)| + |f(y')-f(x)| } { 2r_i }\\
&  \leq 2 \sup_{0< \sfd ( x,y) \leq \delta } \frac{ |f(y) - f(x)| } { \sfd (y,x) } =: 2 {\rm sl} ( x,  \delta), 
\end{align*}
where ${\rm sl}(\cdot,\delta)$ is the local slope on scale $\delta$, satisfying ${\rm sl } (x, \delta)\downarrow \Slope{f}(x)$ as $\delta\downarrow 0$. 
In particular, using that $B_i$ are disjoint and cover $\mm$-almost all of $B$, we have deduce from \eqref{eqn:discrete} and the last inequality that 
\begin{equation}\label{eqn:delta} \int_0^{\infty} \Hausph{h}_{\delta} ( B \cap \{ f=t\} ) \, dt \leq  2 \int_B  {\rm sl} (x,  \delta) \, d \mm. \end{equation}
Now, we conclude letting $\delta \downarrow 0$ and using dominated convergence, since ${\rm sl}(\cdot,\delta)\leq {\rm Lip}(f)$. 
We can obtain the analogous result with ${\rm Lip}_a$ in the right hand side using that, for $x \in B_i$,
$$\frac{ t_i^+ - t_i^- } { 2r_i } \leq {\rm Lip}(f, B_i) \leq {\rm Lip}(f, B_{2\delta}(x)) \to {\rm  Lip}_a (f,x).$$
\end{proof}

\begin{corollary} Let $(X,\sfd,\mm)$ be a doubling metric measure space, with $(X,\sfd)$ complete, such that \eqref{eq:13} holds. 
Then for every Lipschitz function $f:X \to [ 0, \infty)$  such that $\mm(\{ f >0\})< \infty$ 
the set $E_t = \{ f \geq t\}$ is of finite perimeter and $P(E_t, \cdot ) \geq \frac 12 \Hausph{h} \res \partial^* E_t$
for $\Leb{1}$-a.e. $t\in (0,\infty)$.
\end{corollary}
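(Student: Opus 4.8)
The plan is to extract the measure inequality $P(E_t,\cdot)\ge\tfrac12\Hausph{h}\res\partial^*E_t$ from the slicing inequality \eqref{eq:alba3} together with the coarea formula, the crucial point being to localize in the level $t$ so as to convert an \emph{integrated} estimate into a \emph{pointwise} one. Fix once and for all a countable algebra $\mathcal A$ of bounded Borel subsets of $X$ which contains a countable base of the topology (this is possible since $(X,\sfd)$, carrying a doubling measure, is separable). Since $P(E_t,\cdot)$ and $\Hausph{h}\res\partial^*E_t$ will turn out to be finite Borel measures for $\Leb1$-a.e.\ $t$, it suffices to show that for each fixed $B\in\mathcal A$ the inequality $P(E_t,B)\ge\tfrac12\Hausph{h}(B\cap\partial^*E_t)$ holds for $\Leb1$-a.e.\ $t>0$: discarding the countable union of the associated exceptional null sets, for every surviving $t$ the inequality then holds on all of $\mathcal A$, hence — by monotone approximation of open sets from within $\mathcal A$ and outer regularity of the two finite measures — on all Borel sets, which is the assertion. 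Note also, using only continuity of $f$, that $f(x)>t$ makes $x$ a density point of the open set $\{f>t\}\subseteq E_t$ while $f(x)<t$ makes $x$ a rarefaction point of $E_t$; hence $\partial^*E_t\subseteq\{f=t\}$ and it is enough to bound $\Hausph{h}(B\cap\{f=t\})$ by $2P(E_t,B)$.

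The localization device is as follows. For $0<a<b$ set $g=g_{a,b}:=(\min(f,b)-a)^+$. Then $g$ is Lipschitz with ${\rm Lip}(g)\le{\rm Lip}(f)$, takes values in $[0,b-a]$, is constant on each of the open sets $\{f<a\}$ and $\{f>b\}$, and satisfies $\{g\ge s\}=E_{s+a}$ for $s\in(0,b-a]$, $\{g\ge s\}=\emptyset$ for $s>b-a$, and $\{g=s\}=\{f=s+a\}$ for $s\in(0,b-a)$. Both $\{g>0\}=\{f>a\}$ and $\{\Slope{g}>0\}\subseteq\{a\le f\le b\}$ are contained in $\{f>0\}$, so $g\in L^1(X,\mm)$ and $\int_X\Slope{g}\,d\mm\le{\rm Lip}(f)\,\mm(\{f>0\})<\infty$; this is exactly what keeps all quantities below finite. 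Now \eqref{eq:13}, applied to the Lipschitz integrable function $g$, gives $\Var{g}=\int_X\Slope{g}\,d\mm$, while the trivial competitor $g$ itself yields $|Dg|\le\Slope{g}\,\mm$ for the total variation measure of $g$. Two comparable Borel measures with the same finite total mass coincide, so $|Dg|=\Slope{g}\,\mm$.

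I then feed $g$ into two coarea-type statements. The coarea formula for perimeter measures (\cite{Mi}; see also \cite{AmDi,TesiSimone}) gives, for our $B$,
$$\int_a^b P(E_t,B)\,dt=\int_0^\infty P(\{g\ge s\},B)\,ds=|Dg|(B)=\int_B\Slope{g}\,d\mm,$$
and its scalar version \eqref{eq:12} applied to $g$ gives $\int_a^b P(E_t,X)\,dt=\Var{g}<\infty$, so that $P(E_t,X)<\infty$ for $\Leb1$-a.e.\ $t\in(a,b)$; letting $a\downarrow 0$ and $b\uparrow\infty$ shows that $E_t$ is of finite perimeter for $\Leb1$-a.e.\ $t>0$. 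At the same time \eqref{eq:alba3}, applicable to $g$ since $\mm$ is doubling and $\mm(\{g>0\})<\infty$, yields
$$\int_a^b\Hausph{h}(B\cap\{f=t\})\,dt=\int_0^\infty\Hausph{h}(B\cap\{g=s\})\,ds\le 2\int_B\Slope{g}\,d\mm.$$
Combining the two displays, $\int_a^b\bigl(2P(E_t,B)-\Hausph{h}(B\cap\{f=t\})\bigr)\,dt\ge 0$ for every $0<a<b$. For the fixed bounded $B$, both $t\mapsto P(E_t,B)$ and $t\mapsto\Hausph{h}(B\cap\{f=t\})$ are $\Leb1$-integrable on $(0,\infty)$ (with masses at most ${\rm Lip}(f)\mm(\{f>0\})$ and $2{\rm Lip}(f)\mm(\{f>0\})$, by the same two displays with $a\downarrow 0$, $b\uparrow\infty$), so the Lebesgue differentiation theorem turns the above into $2P(E_t,B)-\Hausph{h}(B\cap\{f=t\})\ge 0$ for $\Leb1$-a.e.\ $t$. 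Together with $\partial^*E_t\subseteq\{f=t\}$ this is the sought pointwise inequality, and the measure statement follows as explained in the first paragraph.

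The only genuine difficulty is this passage from an integrated inequality — almost immediate once \eqref{eq:alba3} and the coarea formula are on the table — to a pointwise-in-$t$, measure-valued one. The truncation $g_{a,b}$ is engineered precisely so that $\{\Slope{g_{a,b}}>0\}$ lies in the finite-measure set $\{f\ge a\}$, which keeps $\Var{g}$, $|Dg|$ and all the slicing integrals finite and makes \eqref{eq:13}, the two coarea formulas, and Lebesgue differentiation all directly applicable; the remaining ingredients — the inclusion $\partial^*E_t\subseteq\{f=t\}$ and the separability/outer-regularity bookkeeping that globalizes in $B$ — are routine.
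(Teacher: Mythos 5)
The paper states this corollary without an explicit proof, positioning it as a direct consequence of Proposition~5.1 (inequality \eqref{eq:alba3}), the coarea formula for perimeter, and hypothesis \eqref{eq:13}. Your proof supplies exactly that derivation and is correct.

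Two points you spell out are genuinely necessary and not entirely routine, and it is good that you made them explicit. First, the truncation $g_{a,b}=(\min(f,b)-a)^+$: since $f$ need not be integrable under the hypothesis $\mm(\{f>0\})<\infty$, \eqref{eq:13} and \eqref{eq:12} cannot be applied to $f$ directly; more importantly, even if they could, comparing the two integrated quantities only on $(0,\infty)$ would not yield a pointwise-in-$t$ statement, whereas your construction produces $\int_a^b\bigl(2P(E_t,B)-\Hausph{h}(B\cap\{f=t\})\bigr)\,dt\ge 0$ for \emph{every} $0<a<b$, which does feed into Lebesgue differentiation. Your verification that $\{\Slope{g}>0\}\subseteq\{a\le f\le b\}\subseteq\{f>0\}$, that $\{g\ge s\}=E_{s+a}$ and $\{g=s\}=\{f=s+a\}$ on the relevant range, and that $|Dg|=\Slope{g}\,\mm$ (trivial competitor plus equal finite total masses via \eqref{eq:13}) are all correct. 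Second, the passage from the test-set inequality on a countable algebra $\mathcal A$ to the measure inequality on all Borel sets, together with the observation $\partial^*E_t\subseteq\{f=t\}$ (from continuity of $f$), is the right bookkeeping and is handled properly; one could add the one-line remark that $\Hausph{h}(\partial^*E_t)<\infty$ a.e.\ follows from $P(E_t,X)<\infty$ by exhausting $X$ with elements of $\mathcal A$, so that both sides really are finite Borel measures before invoking outer regularity, but this is implicit in what you wrote. Overall this is a complete and correct proof that makes precise the omitted ``it follows'' step.
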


It would be nice to improve \eqref{eq:alba}, replacing ${\rm Lip}(f)\mm(B)$ with $\int_B\Slope{f}\,d\mm$ in the right hand side,
(compare with \eqref{eq:alba3}), but this seems difficult to obtain without extra assumptions. We notice also that anyhow this inequality is not optimal since, in the Euclidean case $X=\R^d$, we have $\Hausph{h} = \frac{ \omega_{d}}{2 \omega_{d-1}} \Haus{d-1}$
on rectifiable sets.

Finally, a natural question is the identification of regularity assumptions ensuring equality between Minkowski content and perimeter, somehow
bypassing the question of connecting these notions to $\Hausph{h}\res\partial^* E$.


\begin{thebibliography}{99}

\bibitem[AFP]{AFP} {\sc L.~Ambrosio, N.~Fusco, D.~Pallara,} {\em Functions of bounded variation and free discontinuity
problems.} Oxford University Press, 2000. 

\bibitem[Am1]{ambrosio-ahlfors}
{\sc L.~Ambrosio,} \emph{Some fine properties of sets of finite perimeter in {A}hlfors regular
  metric measure spaces.} Adv. Math., {\bf 159} (2001), 51--67.

\bibitem[AM2]{ambrosio-doubling}
{\sc L.~Ambrosio,} \emph{Fine properties of sets of finite perimeter in doubling metric
  measure spaces.} Set-Valued Anal., {\bf 10} (2002), 111--128.

  
\bibitem[AmCaVi]{ACV}
{\sc L.~Ambrosio, F.~Capasso, E.~Villa,}
{\em Approximation of geometric densities of random closed sets,}
Bernoulli, {\bf 15} (2009), 1222--1242.

\bibitem[AGS11a]{AGS11a}
{\sc L.~Ambrosio, N.~Gigli and G.~Savar{\'e}}, \emph{Calculus and heat
  flow in metric measure spaces and applications to spaces with {R}icci bounds
  from below}. Inventiones Mathematicae, {\bf 195} (2014), 289--391.

\bibitem[AGS11b]{AGS11b}
{\sc L.~Ambrosio, N.~Gigli and G.~Savar{\'e}}, \emph{Metric measure spaces with {R}iemannian {R}icci curvature
  bounded from below}. Duke Math. J., {\bf 163} (2014), 1405--1490.
  

\bibitem[AmDi]{AmDi}
{\sc L.~Ambrosio and S.~Di Marino}, \emph{Equivalent definitions of $BV$ space and of total variation on metric measure spaces.}
Journal of Functional Analysis, {\bf 266} (2014), 4150--4188.


\bibitem[AmPiSp]{AmPiSp}
{\sc L.~Ambrosio, A.~Pinamonti, G.~Speight}, \emph{Tensorization of Cheeger energies, the space $H^{1,1}$ and the area formula.} 
Advances in Mathematics, {\bf 281} (2015), 1145--1177.



\bibitem[BuZa]{Burago}
{\sc Yu.D.~Burago, V.A.~Zalgaller,}
\emph{Geometric inequalities.} Grundlehren der Mathematischen Wissenschaften [Fundamental Principles
 of Mathematical Sciences], {\bf 285}. Springer Series in Soviet Mathematics.
Springer-Verlag, 1988.

\bibitem[CaSb]{CaSb}
{\sc L.~Carbone, C.~Sbordone}, \emph{Some properties of $\Gamma$-limits of integral functionals.}
 Ann. Mat. Pura Appl., {\bf122} (1979), 1--60.
 
\bibitem[CM15a]{CavallettiMondinoa}
          \textsc{F. Cavalletti, A. Mondino},
          {\em Sharp and rigid isoperimetric inequalities in metric-measure spaces with lower Ricci curvature bounds,}
          arXix:1502.06465.

\bibitem[CM15b]{CavallettiMondinob}
          \textsc{F. Cavalletti, A. Mondino},
          {\em Sharp geometric and functional inequalities in metric measure spaces with lower Ricci curvature bounds,}
          arXix:1505.02061.
          
\bibitem[CM15c]{CavallettiMondinoc}
          \textsc{F. Cavalletti, A. Mondino},
          {\em Isoperimetric inequalities for finite perimeter sets in metric-measure spaces with lower Ricci curvature bounds.}
          In progress.
           
\bibitem[CLL]{CLL} {\sc A.~Chambolle, S.~Lisini, L.~Lussardi}, \emph{A remark on the anisotropic outer Minkowski content.}
Adv. in Calculus of Variations, {\bf 7} (2013), 241--266.

\bibitem[Chee]{Chee}
{\sc J.~Cheeger}, {\em Differentiability of {L}ipschitz functions on metric
  measure spaces}. Geom. Funct. Anal., {\bf 9} (1999), 428--517.



\bibitem[Di]{TesiSimone}
{\sc S.~Di Marino}, {\em Recent advances on BV and Sobolev Spaces in metric measure spaces.}
PhD thesis, Scuola Normale Superiore, 2014 (available at cvgmt.sns.it).
  
\bibitem[GRS]{GRS} {\sc N.~Gozlan, C.~Roberto, P.-M.~Samson}, \emph{Hamilton Jacobi equations on metric spaces and transport entropy inequalities.}
Rev. Mat. Iberoam., {\bf 30} (2014), 133--163.

  
\bibitem[GiHa]{GiHa} {\sc N.~Gigli, B.-X.~Han}, \emph{Independence on p of weak upper gradients on $RCD$ spaces.}
ArXiv 1407.7350.





\bibitem[HKLL14]{HKLL14}
{\sc H.~Hakkarainen, J.~Kinnunen, P.~Lahti, and P.~Lehtel\"a}, 
\emph{Relaxation and integral representation for functionals of linear growth on metric measure spaces.}
ArXiv 1401.5717 (2014).

\bibitem[Led]{LED}
{\sc M. Ledoux,} \emph{The concentration of measure phenomenon.}
American Mathematical Society, 2001.


\bibitem[Mi]{Mi}
{\sc M.~Miranda Jr}, \emph{Functions of bounded variation on ``good'' metric spaces}. J. Math. Pures Appl., 
{\bf 82} (2003), 975--1004.

\bibitem[Ra]{Ra}
{\sc T.~Rajala}, \emph{Local {P}oincar\'e inequalities from stable curvature conditions on metric spaces}. Calc. Var. Partial Differential Equations, 
{\bf 44} (2012), 477--494.

\bibitem[Sa]{Sa}
{\sc G.~Savar\'e}, \emph{Self-improvement of the {B}akry-\'{E}mery condition and {W}asserstein contraction of the heat flow in {${\rm RCD}(K,\infty)$} metric measure spaces}. Discrete Contin. Dyn. Syst., 
{\bf 34} (2014), 1641--1661.


\bibitem[HKST]{HKST}
{\sc J.~Heinonen, P.~Koskela, N.~Shanmugalingham and J.~Tyson}, \emph{Sobolev spaces on metric measure spaces: an
approach based on upper gradients.} Cambridge University Press, New Mathematical Monographs {\bf 27} (2015).

\end{thebibliography}
\end{document}